\documentclass[a4paper,12pt]{amsart}
\usepackage{amsmath,amssymb,colordvi}
\usepackage{graphicx}

\newtheorem{theorem}{Theorem}

\newtheorem{example}[theorem]{\it Example}
\newtheorem{proposition}[theorem]{Proposition}
\newtheorem{definition}[theorem]{Definition}

\newtheorem{remark}[theorem]{\it Remark}

\font\tenBb=msbm10 \font\sevenBb=msbm7 \font\fiveBb=msbm5
\newfam\Bbfam \textfont\Bbfam=\tenBb \scriptfont\Bbfam=\sevenBb
\scriptscriptfont\Bbfam=\fiveBb

\def\Q{{\Bb Q}}

\def\Q{{\mathcal Q}}

\textwidth 15cm

\textheight 21cm

\oddsidemargin 0.4cm

\evensidemargin 0.4cm

\begin{document}
\title{A functional treatment of asymmetric copulas}

\author{Ahmed Sani }
  \address{Universit\'e Ibn Zohr, Facult\'e des Sciences, D\'epartement de Math\'ematiques, Agadir, Maroc}
 \email{ahmedsani82@gmail.com }

 \author{Loubna Karbil}
  \address{Universit\'e Hassan I, Facult\'e des Sciences, D\'epartement de Math\'ematiques, Casablanca, Maroc}
  \email{l.karbil@gmail.com}


\footnote{\today}

\subjclass[2010]{Primary   62 Hxx;62 Exx Secondary 46 Axx  }

\keywords{Copulas-Asymmetry-Exchangeability-Compactness-Dependence modeling-Cobb-Douglas model.}

\begin{abstract}
The concept of asymmetric copulas is revisited and is made more precise. We give a rigorous topological argument for opportunity to define asymmetry measures defined recently by K.F Siburg \cite{Si16} through exhibiting at least three ordered classes  of copulas according to a suitable equivalence relation. We define a process of ordering  subcopulas  which makes clearer the  degree of asymmetry. As illustration, we treat the asymmetric Cobb-Douglas utility model.

\end{abstract}

\maketitle

\section*{Introduction}
\setcounter{theorem}{0}
 \setcounter{equation}{0}

Recently copulas are  becoming an emerging concept in probability and statistic models. The copula permits in particular to model dependence structure between random variables. There are various and important applications of copulas in finance, actuarial and risk management. For a given data set, the practical and real problem is to determine a copula (or copulas) which fits for describing the dependence structure.  Our summarized note on copulas is based on the edifying paper of A. Sklar and the theorem that bears his name\cite{Sk59}  and the unavoidable book of R. Nelson \cite{Ne06}. The copula is a notion which allows a  normalization of  random variables law. Assume for example that one has a random vector $(X,Y)$ where $X$ and $Y$ act on different probability spaces and eventually with different laws described by their distribution functions $F^X$ and $F^Y$ respectively. Let $H$ denote the joint distribution of $X$ and $Y.$ In order to highlight properties of the couple $(X,Y)$ concerning dependence, regression or symmetry it will be convenient  to  project $X$ and $Y$ in a common space where their projections share the same law and where arithmetic operations are rather possible. It is a classical result that $F^X(X)$ and $ F^Y(Y)$ give a nice answer mainly when the margin distributions $F^X$ and $F^Y$ are continuous. In fact, $(F^X(X), F^Y(Y))$ has a uniform distribution $(U,V)$ on $\mathbf I^2=[0,1]\times [O,1]$ and a copula of the $(X,Y)$ is exactly such as  $C_{X,Y}(u,v)=H(x,y)$ as soon as $(u,v)=(F^X(x),F^Y(v))$. This technique of normalization consisting on researching of common projection space becomes recently a sufficient tool in many fields where heterogeneous data are used. We refer for example to \cite[III,A]{Ta16} concerning the projection of heterogeneous multimedia data into a common low-dimensional
Hamming space; and to \cite{Di05} for similar procedure using \emph{Krylov method} to reduce power systems by projecting data in smaller finite dimensional space.  \\
In order to give more autonomy to this work, we recall the essential results on copulas. Without loss of generality,  we restrict ourself to the bivariate case. To this end we consider the unit closed square $I^2=[0,1]\times [0,1].$ 
\begin{definition}\label{def1}
A copula $C$ is a function on $I^2$ into $I=[0,1]$ which satisfies the following conditions for all $(u,v)\in I^2$:
\begin{enumerate}
	\item $C(0,v)=C(u,0)=0.$
	\item $C(1,v)=v$ and $C(u,1)=u.$
	\item the 2-increasing property: $C(u_2,v_2)-C(u_2,v_1)-C(u_1,v_2)+C(u_1,v_1)\ge0.$
\end{enumerate}

\end{definition}
When we deal with copulas, it is almost impossible to overcome Sklar's theorem which is the first bridge between marginal statistical distributions and their joint multivariate distribution. Let us give a restitution of this famous theorem in our context of bivariate case

\begin{theorem}[Sklar's theorem]
Let $H$ be two-dimensional distribution function on a probability space $(\Omega,p)$  with marginal distribution functions $F$ and $G.$ Then there exists
a copula C such that
\begin{equation}\label{eq1}
H(x,y) = C(F(x),G(y)).
\end{equation}

If  $F$ and $G$ are  continuous then the copula $ C$ is unique.
Conversely, for any copula $C$ and any distribution functions $F$ and $G$ the
 equation (\ref{eq1}) defines a two-dimensional distribution function $H$ with
marginal distribution functions $F$ and $G$.
\end{theorem}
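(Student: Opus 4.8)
The plan is to handle the converse direction first --- a direct verification --- and then the direct statement, separating the clean continuous case from the general one.

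\textit{Converse.} Given a copula $C$ and univariate distribution functions $F,G$, set $H(x,y)=C(F(x),G(y))$. Groundedness $H(-\infty,y)=H(x,-\infty)=0$ comes from axiom (1) of Definition~\ref{def1} together with $\lim_{x\to-\infty}F(x)=0$; the limits $H(+\infty,y)=G(y)$, $H(x,+\infty)=F(x)$ come from axiom (2) and $\lim F=\lim G=1$; and the 2-increasing inequality for $H$ follows from axiom (3), since the monotone maps $F,G$ send a rectangle $[x_1,x_2]\times[y_1,y_2]$ into $[F(x_1),F(x_2)]\times[G(y_1),G(y_2)]$, whose $C$-volume is $\ge 0$. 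Right-continuity is inherited likewise. Hence $H$ is a bivariate distribution function with margins $F$ and $G$.

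\textit{Direct part, $F,G$ continuous.} Introduce the quasi-inverses $F^{(-1)}(u)=\inf\{x:F(x)\ge u\}$ and $G^{(-1)}$, and define
\[
C(u,v)=H\bigl(F^{(-1)}(u),G^{(-1)}(v)\bigr),\qquad (u,v)\in I^2.
\]
Continuity of $F$ gives $F(F^{(-1)}(u))=u$ for $u\in(0,1)$ (the values $u\in\{0,1\}$ being handled through the limits of $H$), the three axioms of Definition~\ref{def1} for $C$ are then read off from the corresponding properties of $H$, and by construction $H(x,y)=C(F(x),G(y))$ everywhere. Uniqueness follows at once: any two such copulas agree on $\mathrm{Ran}\,F\times\mathrm{Ran}\,G$, which is dense in $I^2$ when $F,G$ are continuous, and copulas are continuous (indeed Lipschitz in each variable).

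\textit{General case.} The workhorse is the estimate $|H(x_2,y_2)-H(x_1,y_1)|\le|F(x_2)-F(x_1)|+|G(y_2)-G(y_1)|$, valid for every bivariate distribution function with margins $F,G$: it shows that $H(x,y)$ depends on $(x,y)$ only through $(F(x),G(y))$, so one may define unambiguously a \emph{subcopula} $C'$ on $\overline{\mathrm{Ran}\,F}\times\overline{\mathrm{Ran}\,G}$ by $C'(F(x),G(y))=H(x,y)$, extended to the closure by the same bound. It then remains to extend $C'$ to a genuine copula $C$ on all of $I^2$ by bilinear interpolation across the rectangular gaps of its domain, and to check that the boundary conditions and 2-increasingness persist under this interpolation; this gives $H(x,y)=C'(F(x),G(y))=C(F(x),G(y))$. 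I expect this last extension step --- verifying the 2-increasing property for every rectangle straddling a gap of the subcopula domain --- to be the main technical obstacle, the remainder being bookkeeping with monotone limits. A route that bypasses it is the probabilistic one: on an enlarged space carrying a uniform variable $W$ independent of $(X,Y)$, put $U=F(X-)+W\,(F(X)-F(X-))$ and $V=G(Y-)+W\,(G(Y)-G(Y-))$; then $U,V$ are uniform on $I$, one has $X=F^{(-1)}(U)$ and $Y=G^{(-1)}(V)$ almost surely, and the joint distribution function of $(U,V)$ is a copula satisfying (\ref{eq1}).
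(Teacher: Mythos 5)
The paper does not prove this statement: Sklar's theorem is recalled here as a classical result, with the proof delegated to the original paper of Sklar and to Nelsen's monograph, so there is no in-paper argument to compare yours against. Your sketch is essentially the standard proof and is sound in outline. The converse verification is correct (the $H$-volume of a rectangle equals the $C$-volume of its image rectangle, hence is nonnegative). In the continuous case you should make explicit why $H(F^{(-1)}(F(x)),y)=H(x,y)$ --- it follows from the same Lipschitz-type bound $|H(x_2,y)-H(x_1,y)|\le F(x_2)-F(x_1)$ that you invoke later, since $F$ is constant on $[F^{(-1)}(F(x)),x]$ --- but this is bookkeeping, and the density-plus-continuity argument for uniqueness is exactly right. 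For the general case you correctly isolate the real content, namely the extension of the subcopula $C'$ from $\overline{\mathrm{Ran}\,F}\times\overline{\mathrm{Ran}\,G}$ to $I^2$ by bilinear interpolation and the verification that 2-increasingness survives for rectangles straddling gaps; you do not carry this out, but you supply a complete bypass via the distributional transform $U=F(X-)+W\,(F(X)-F(X-))$, which is a legitimate self-contained proof (one checks $U$ is uniform, $X=F^{(-1)}(U)$ a.s., and $\{U\le F(x)\}=\{X\le x\}$ up to null sets, whence the joint distribution function of $(U,V)$ is a copula satisfying (\ref{eq1})). Either branch, once the flagged verification is written out, yields a correct proof; the probabilistic route is the more economical of the two.
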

 
Mathematically speaking, it is not difficult to establish that the three conditions above (in definition  \ref{def1}) imply that a copula enjoys certain analytical properties such continuity and monotonicity  on each argument. Moreover using  condition (3) one may prove easily that a copula as defined in definition (\ref{def1})  is Lipschitz-continuous and then almost everywhere differentiable. By a classical integration theorem, it is so possible to rebuild the copula from one of its partial derivative. Algebraically speaking, the well known Fr\'echet-Hoeffding result states  very important and canonical estimations for any given copula as formulated in the following proposition
\begin{proposition}
The functions $C_l$ and $C_u$ defined for all $(u,v)\in I^2$ by setting $C_l(u,v)=\min(u,v)$ and $C_u(u,v))=\max(u+v-1,0)$ are copulas. Furthermore, for every copula $C,$ one has
$$
\forall (u,v)\in I^2 \quad \quad C_l(u,v)\le C(u,v)\le C_u(u,v).
$$ 
\end{proposition}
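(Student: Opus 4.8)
The proposition has two independent parts, and I would treat them in turn. To show that $C_l(u,v)=\min(u,v)$ and $C_u(u,v)=\max(u+v-1,0)$ are copulas, the plan is to verify the three conditions of Definition \ref{def1} for each. Conditions (1) and (2) are immediate substitutions: for $C_l$ one has $\min(0,v)=0$ and $\min(1,v)=v$, and symmetrically in $u$; for $C_u$ one has $\max(v-1,0)=0$ since $v\le 1$, and $\max(v,0)=v$. The only nontrivial point is the 2-increasing property (3). For $C_l$ I would argue by cases according to the relative order of $u_1,u_2,v_1,v_2$, or more cleanly observe that $\min$ is the distribution function of mass spread along the diagonal of $I^2$, so every rectangle receives nonnegative mass. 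For $C_u$ I would likewise check that the rectangle increment $C_u(u_2,v_2)-C_u(u_2,v_1)-C_u(u_1,v_2)+C_u(u_1,v_1)$ is nonnegative, either by a short case analysis on whether the corners lie in the region $u+v\ge 1$ or via the corresponding antidiagonal mass interpretation.

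\textbf{Second part: the bounds.} The key preliminary step is to extract monotonicity of an arbitrary copula $C$ from the axioms. Applying condition (3) with $u_1\le u_2$, $v_1=0$, $v_2=v$ and using condition (1) (so that $C(u,0)=0$) gives $C(u_2,v)-C(u_1,v)\ge 0$, hence $C$ is nondecreasing in each variable. From monotonicity together with condition (2) I then obtain, for every $(u,v)\in I^2$, the two inequalities $C(u,v)\le C(1,v)=v$ and $C(u,v)\le C(u,1)=u$, and therefore $C(u,v)\le\min(u,v)$. For the other side I would apply the 2-increasing property (3) to the rectangle $[u,1]\times[v,1]$, i.e. to the corners $(u,v),(1,v),(u,1),(1,1)$; using condition (2) this reads $1-v-u+C(u,v)\ge 0$, so $C(u,v)\ge u+v-1$, and combined with $C(u,v)\ge 0$ (since $C$ takes values in $I$) this yields $C(u,v)\ge\max(u+v-1,0)$.

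\textbf{Assembly and the main obstacle.} Putting the two pointwise bounds together brackets every copula between the two extreme copulas constructed in the first part, which is exactly the Fr\'echet--Hoeffding content of the proposition. Beyond the case analysis verifying condition (3) for $C_l$ and $C_u$, the step I expect to be the genuine obstacle, and the point that must be stated with care, is the \emph{orientation} of the sandwich: the copula axioms force $\max(u+v-1,0)$ to be the pointwise smallest and $\min(u,v)$ the pointwise largest admissible value, so the inequalities one can actually establish are $\max(u+v-1,0)\le C(u,v)\le\min(u,v)$. In the notation of the proposition this is the bracketing of every copula $C$ between the two Fr\'echet bounds $C_u$ and $C_l$, and I would double-check the labelling of $C_l$ and $C_u$ against the directions forced by the derivation before committing to the final chain, since the monotonicity argument and the 2-increasing argument each pin down unambiguously which bound must sit on which side.
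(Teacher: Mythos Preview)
The paper does not actually prove this proposition: it is stated as the classical Fr\'echet--Hoeffding result and immediately followed by the remark that ``in the litterature, $C_l$ and $C_u$ are said respectively the Fr\'echet--Hoeffding lower bound and the Fr\'echet--Hoeffding upper bound,'' with no argument given. Your proof is the standard one and is correct; in particular the derivation of monotonicity from axioms (1) and (3), the bound $C(u,v)\le\min(u,v)$ via $C(u,v)\le C(1,v)=v$ and $C(u,v)\le C(u,1)=u$, and the bound $C(u,v)\ge u+v-1$ from the 2-increasing inequality on $[u,1]\times[v,1]$ are exactly the usual steps.

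You are also right to flag the orientation issue. As written in the paper, $C_l=\min(u,v)$ and $C_u=\max(u+v-1,0)$, yet the displayed chain is $C_l\le C\le C_u$; what the axioms actually force is $\max(u+v-1,0)\le C(u,v)\le\min(u,v)$, i.e.\ $C_u\le C\le C_l$ in the paper's notation. This is a mislabelling in the paper (the subscripts $l,u$ are swapped relative to the standard convention), not a gap in your argument.
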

 In the litterature, $C_l$ and $C_u$ are said respectively  \emph{the  Fr\'echet-Hoeffding lower bound} and the  \emph{the Fr\'echet-Hoeffding upper bound}.\\
 It is worth to mention that these bounds depend on the natural order defined on the set of all copulas as follows
 $$
 C_1\le C_2 \ \Longleftrightarrow  \ \forall (u,v)\in I^2:\ C_1(u,v)\le C_2(u,v).
 $$ 
This natural order is far from being total (it is just a preorder). For example,  according to \cite[Example 2.18]{Ne06}, the copulas $\frac{C_l+C_u}{2}$ and $\Pi :(u,v)\mapsto \pi(u,v)=uv$ are not comparable. Other orders are possible. For example, in \cite{De10},
a general concept of a regression dependence order (RDO, as denoted by the authors) was introduced. Recently, to characterize the symmetry degree of a given copula compared to another K.F. Siburg et al. \cite{Si16} defined a partial order on copulas set by introducing the difference between a copula $C$ and its transpose $C^{T}$ defined by

$$
\forall (u,v)\in I^2 \ \ C^{\top}(u,v)=C(v,u).
$$
Let us recall the order adopted there (in\cite{Si16}). We say that a copula $C_1$ is \emph{more symmetric} than another one $C_2$ and we write $C_1 \preceq C_2$ if and only if, roughly speaking,  the difference between $C_1$ and its transpose is smaller than the difference between $C_2$
and $C_2^{\top}.$ To be more precise, we state:

\begin{equation}\label{eq2}
	C_1 \preceq C_2   \Longleftrightarrow 
	 \forall (u,v)I^2:|C_1(u,v)-C_1^{\top}(u,v)|\le  |C_2(u,v)-C_2^{\top}(u,v)|.
\end{equation}

For the partial order "$\preceq$" the lower bound is manifestly the product copula, said also \emph{independence copula} $P$ given by $\forall(u,v)\in I^2\ P(u,v)=uv.$ Moreover $P$ is the smallest element of the partially ordered set $(\mathcal {CP} ,\preceq))$ where $\mathcal {CP}$ denotes the set of all copulas. Unfortunately as proved in \cite{KM06}, this order has not an upper bound and so a fortiori has not a greatest element nor an upper bound.\\
The main result of this paper is to establish that the order $"\preceq"$ is not trivial in the following sense: the classes of copulas with the same degree of symmetry are not reduced to one. Thus for a given measure of symmetry (or asymmetry), it will be convenient to search the \emph{most symmetric} among a  parameterized  copulas. Topological arguments are given  to prove our purposes.\\

In the first section, we recall and reshuffle the most important results on asymmetric copulas. As the mathematical ingredients are needed to make clear and precise, we  introduce,  at the second section, all mathematical tools which will be used in the remaining of this paper. The third section is devoted to the main results and their proofs. At last we give a way to define a new measure of asymmetry baptized "\emph{local asymmetry measure.}"

\section{Asymmetric copulas}

A large and most well known class of copulas is the Archimedean ones. A copula $C$ is said Archimedean if there exists a decreasing  and convex function $\phi:\ [0,1]\mapsto \mathbb R^+$ with $\phi(1)=0$ such that for all $(u,v)\in I^2$  we have $C(u,v)=\phi^{[-1]}(\phi(u)+\phi(v)).$ The function $\phi$ is said \emph{ a generator} of $C.$ Here  $\phi^{[-1]}$ denotes the general inverse of $\phi.$ A lot of researches were axed on generalizing this class of copulas. For example, in
\cite{Ay14} the authors defined a general copula as been every function $C_g$ that we may write $C_g(u,v)=\phi^{[-1]}(\phi(\max(u,v))+\psi(\min(u,v))$ where $\phi$ is a strictly decreasing  continuous function, $\psi$ is assumed to be  just decreasing continuous function and $\psi-\phi$ is increasing. Further steps  have been taken in this direction. Among others, different ratios of correlation and dependence like $\tau$ of Kendall and $\rho$ of Spearman
were calculated in terms of the generator $\phi$ and its cogenerator $\psi$ of generalized copula.\\
unfortunately all of these copulas are symmetric (i.e $C(u,v)=C(v,u),\ \forall (u,v)\in I^2.$). In practice, this kind of copulas does not suffice to model 
phenomena which do not present asymmetric data. Awareness of the interest of this type of copulas began with the publication of the  Klement and Mesiar paper \cite{KM06}. Just a year after, appeared the article of Nelson \cite{Ne07} where the concept of asymmetric copula is dealt with, in some way, to the equivalent notion of  non exchangeability of two random variables. As a consequence, many tests of parametric and nonparametric symmetry tests are implemented. As a revealing example, we refer to \cite{Si16} and  the important paper of A. Erdely and  J. M. Gonz\`{a}lez-Barrios \cite{EG10}  and in another and close context \cite{MR14}. We mention also the attempt of dependence coefficient \emph{symmetrization} as initiated by Cifarelli et al.\cite{Ci96}.\\

 Recently Siburg et al. \cite[Definition 2.2]{Si16} define via (\ref{eq2}) an order on the set of all copulas. This is in fact the starting point of some important remarks and ideas that  we will mention or which are already mentioned in \cite{Si16}.\\
First, the order $\preceq$ does not distinguish between elements of a large class of copulas. To see this, it suffices to remember that it is possible, for suitable copulas, to have simultaneously $C_1\preceq C_2$ and $C_2\preceq C_1$ but they do not turn equal. This is clear for every Archimedean copulas or generally for every symmetric ones. For more general case, one may choose $C_2=C_1^{\top}$ as given in \cite[Remark 2.5]{Si16}. A more serious drawback is the incompleteness problem: A large class of copulas are not comparable. To overcome these inconsistencies, the  asymmetry measures are introduced as a complement of the order $\preceq.$ A large catalog of asymmetry and symmetry measures are proposed in many works mainly, among others, \cite{PP12}. So it will be interesting to know how many classes of copulas that have not the same measure. We will take advantage of the growth of measures and some elementary topological results to prove that there are at least three classes of copulas without exhibiting them explicitly when we deal with quotient topological space.  All these ingredients will be developed in the next section. 
In the fourth section, we define a new and general measure of asymmetry based on Zygmond-Calderon theorem \cite{Kr02} for general copulas and give a bridge with the known Arrow's impossibility theorem " \emph{Social choice and individual values}" published in 1951. More information on this important  result in political economy and economic science may be found in \cite{Bl69} or in the last research on this topic \cite{Yu15}. The generality of our measure consists on its ability to compare subcopulas (see \cite{Ne06} and/or section 3 below) instead of copulas.

\section{Mathematical ingredients}
Topological results presented in this section are known and may be consulted in any elementary course of topology and functional analysis. We suggest as a typical one the  famous Choquet's book \cite{Ch66} and the  irreplaceable monograph of H. Brezis \cite{Br83} or the recent and English version \cite{Br11}.\\
Let $X$ be a toplogical space. An equivalence   denoted  $\sim$ on $X$ is a relation which responds to three well known properties
\begin{enumerate}
	\item Reflexivity : $x\sim x , \forall x\in X$.
	\item Symmetry: $x\sim y \iff y\sim x.$
	\item Transitivity:  $x\sim y\  \mbox{and}\  y\sim z \  \implies x\sim z.$
\end{enumerate}

For a given $x$ in $X$ the class (or orbit)  $\dot{x}$ of $x$ is the set of all elements $y$ which are related with $x$ (i.e $\dot{x}=\{y\in X,\ x\sim y.\}$) The equivalence classes give a partition of $X$ and the mapping $x\in X\mapsto \dot{x}\in X/ \sim$  is said the \emph{canonical projection} of $X$ into the set  $X/ \sim$ of a all orbits. It is a classical question to ask what adequate topology one may  define on $X/ \sim.$ The following proposition gives an answer 
\begin{proposition}
Let $X$ be a topological space and $\sim$ an equivalence on $X.$ Consider the canonical projection $p: X\rightarrow X/\sim.$ The quotient topology is the family 
\begin{equation}\label{equat1}
\tau=\{U\subset X/ \sim,\ p^{-1}(U) \ \mbox{is open set in }\ X \}.
\end{equation}
\end{proposition}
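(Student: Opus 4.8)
The statement is really the assertion that the family $\tau$ displayed in (\ref{equat1}) is a topology on $X/\sim$ (indeed, the finest one for which $p$ is continuous), so the plan is simply to verify the three axioms of a topology, exploiting throughout the elementary fact that taking the preimage under a fixed map commutes with arbitrary unions and with finite (in fact arbitrary) intersections.

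First I would dispose of the trivial members: since $p^{-1}(\emptyset)=\emptyset$ and $p^{-1}(X/\sim)=X$ are both open in $X$, the sets $\emptyset$ and $X/\sim$ lie in $\tau$. Next, for a family $\{U_i\}_{i\in\Lambda}\subseteq\tau$ I would invoke the identity $p^{-1}\big(\bigcup_{i\in\Lambda}U_i\big)=\bigcup_{i\in\Lambda}p^{-1}(U_i)$ to see that the preimage of the union is a union of open subsets of $X$, hence open, so $\bigcup_{i\in\Lambda}U_i\in\tau$; and for $U_1,\dots,U_n\in\tau$ the identity $p^{-1}\big(\bigcap_{k=1}^{n}U_k\big)=\bigcap_{k=1}^{n}p^{-1}(U_k)$ exhibits the preimage of the intersection as a finite intersection of open sets, hence open, so $\bigcap_{k=1}^{n}U_k\in\tau$. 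This proves that $\tau$ is a topology. To round off the statement I would then remark that $p$ is continuous for $\tau$ by the very definition of $\tau$, and that any topology $\tau'$ on $X/\sim$ making $p$ continuous satisfies $\tau'\subseteq\tau$ (since each $U\in\tau'$ has $p^{-1}(U)$ open in $X$, i.e. $U\in\tau$); thus $\tau$ is the largest such, which is precisely what it means to call it \emph{the} quotient topology.

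There is no genuine mathematical obstacle here: the argument uses nothing about the equivalence $\sim$ or the structure of $X/\sim$ beyond the surjection $p$, and would read verbatim for any map out of a topological space. The only point requiring care is expository — one should actually verify stability of $\tau$ under unions and intersections rather than merely checking that $p$ is continuous, since the former is the real content while the latter is immediate and strictly weaker.
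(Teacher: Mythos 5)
Your verification is correct and complete: the preimage identities $p^{-1}(\bigcup_i U_i)=\bigcup_i p^{-1}(U_i)$ and $p^{-1}(\bigcap_k U_k)=\bigcap_k p^{-1}(U_k)$ are exactly what is needed, and your closing remark that $\tau$ is the finest topology making $p$ continuous matches the paper's own comment immediately after the statement. Note that the paper offers no proof at all for this proposition --- it is quoted as a standard fact from elementary topology (with reference to Choquet and Brezis) --- so there is nothing to compare against; your argument is the standard one and fills the gap correctly.
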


The topology on $X/ \sim$ given by (\ref{equat1})  is the optimal one which makes the canonical projection continuous. Henceforward, $X/ \sim$ will be endowed with this canonical topology.\\
A most important property of a topology on $X$ is the \emph{separation} which means that if $ x$ and $y$ are different elements of $X,$ one can always find two respective neighbors $O_x$ and $O_y$ of $x$ and $y$ such that $O_x\cap O_y=\emptyset.$ Unfortunately, the quotient topology is rather pathologic because the separation property is not always warranted. We mention  without developing it the classical and illustrative  example:
$$
X=\mathbb R \  \mbox{and} \  x\sim y \iff x-y\in \mathbb Q.
$$
One proves easily that the quotient space $\mathbb R/ \sim$ is not separate. As a consequence, among quotient topological spaces it will be interesting to characterize those which are separate. To this end, we give a short answer that we will use as a recipe to establish our first  main result on copulas in the next section 
\begin{theorem}\label{thmseparate}
Let $\sim$ be an equivalence relation on a topological space $X.$
Assume that one of the following conditions is satisfied
\begin{description}
	\item [a] The relation $\sim$ is open.
	\item [b] The space $X$ is compact and separate.
\end{description}
Then the quotient space $X/ \sim$ is separate.
\end{theorem}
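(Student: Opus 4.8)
The plan is to unwind the meaning of separation in the quotient. By the definition (\ref{equat1}) of the quotient topology, a subset $U\subseteq X/\!\sim$ is open precisely when $p^{-1}(U)$ is open in $X$, and the sets of this form are exactly the \emph{saturated} open sets of $X$, i.e. the open sets that are unions of classes. Consequently the whole problem collapses to the following statement: any two disjoint classes $\dot x\neq\dot y$ can be enclosed in two \emph{disjoint saturated open sets} of $X$. I would then treat the two hypotheses separately, since they call for genuinely different mechanisms, and I read the phrase ``$\sim$ is open'' in \textbf{a} as asserting that the graph $\Gamma=\{(x,y)\in X\times X:\ x\sim y\}$ is an open subset of $X\times X$; this is the only reading compatible with the $\mathbb R/\mathbb Q$ example quoted just before the theorem, whose canonical projection is open and yet whose quotient fails to be separate.

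Under hypothesis \textbf{a}, I would first show that every class is open. If $(x,y)\in\Gamma$, openness of $\Gamma$ furnishes a basic neighbourhood $O_x\times O_y\subseteq\Gamma$; fixing the first coordinate at $x$ gives $O_y\subseteq\dot x$, so $\dot x$ is a neighbourhood of each of its points and hence open. Since the classes partition $X$, the complement of any class is a union of the remaining (open) classes and is therefore open, so each class is in fact clopen. Thus every singleton of $X/\!\sim$ has open preimage, the quotient carries the discrete topology, and discreteness trivially entails separation. This case needs no further work.

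Under hypothesis \textbf{b}, with $X$ compact and separate, I would run the classical ``closed projection'' argument. Fix distinct classes $\dot x,\dot y$. The plan is: (i) show each class is closed, hence compact, in $X$; (ii) use that a compact separate space is normal to separate the two disjoint classes by disjoint open sets $U\supseteq\dot x$ and $V\supseteq\dot y$; (iii) descend to the quotient by proving that $p$ is a closed map. For (iii), if $F\subseteq X$ is closed it is compact, and $p^{-1}(p(F))$ equals the image under the first projection of $\Gamma\cap(X\times F)$; this intersection is closed in the compact space $X\times X$, hence compact, so its projection is compact and therefore closed in the separate space $X$, whence $p(F)$ is closed. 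Once $p$ is known to be closed, setting $V_i=(X/\!\sim)\setminus p(X\setminus U_i)$ for the two separating opens produces disjoint open neighbourhoods of $\dot x$ and $\dot y$ in the quotient, finishing the argument.

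The step I expect to be the genuine obstacle is entirely concentrated in \textbf{b}, specifically in (i) and in the closedness of $\Gamma\cap(X\times F)$ invoked in (iii): the slices and intersections of $\Gamma$ must behave like closed sets for the compactness mechanism to bite. Every formal manipulation in (ii) and (iii) is routine once $p$ is known to be a closed map, so the whole weight of case \textbf{b} rests on establishing this closedness through the compactness and separatedness of $X$. I would therefore isolate and verify that pivotal point first, before assembling the separation, since it is there — and not in the bookkeeping of neighbourhoods — that the real content of the compact-separate case lies.
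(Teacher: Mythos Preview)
The paper does not actually prove this statement; it records it as background with pointers to Choquet and Brezis and only ever invokes case \textbf{b}. More to the point, the statement as written is false in both cases, so no honest proof can be supplied.

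For \textbf{a}, the paper itself defines, in the sentence immediately following the theorem, that ``the relation $\sim$ is said open if the canonical projection is so.'' Under that definition the very example quoted just before the theorem, $\mathbb R/\mathbb Q$, is a counterexample: the projection $\mathbb R\to\mathbb R/\mathbb Q$ is open (the saturation of any open $U$ is $\bigcup_{q\in\mathbb Q}(U+q)$, open as a union of translates), yet the quotient is not separate. Your reinterpretation of ``open'' as ``open graph'' does make your argument go through, but it directly contradicts the paper's explicit definition; you have repaired the hypothesis rather than proved the stated claim.

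For \textbf{b}, compactness and separatedness of $X$ alone do not force $X/\!\sim$ to be separate. Take $X=[0,1]\times\{0,1\}$ and declare $(x,0)\sim(x,1)$ for every $x\neq 0$: then $X$ is compact Hausdorff, but the quotient is the interval with a doubled origin, which is not separate. The missing hypothesis is precisely the one you single out as ``the pivotal point'': that the graph $\Gamma$ be \emph{closed} in $X\times X$. You propose to ``isolate and verify'' this from compactness and separatedness of $X$, but it cannot be verified --- it fails in the example just given. Once that extra hypothesis is added, your outline for \textbf{b} is exactly the classical proof, and in the paper's actual application to $\mathcal{CP}$ the graph \emph{is} closed (the equivalence is the fibre relation of the continuous map $C\mapsto|C-C^{\top}|$), so the downstream use survives even though the theorem as stated does not.
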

The relation $\sim $ is said open if the canonical projection is so. We will not give more details because we will just use  the second item in theorem (\ref{thmseparate}). This latter condition implies that the compactness of the set of all copulas is needed. In functional analysis, the natural tool that we have at disposal is 
\begin{theorem}[Ascoli-Arzela]\label{Ascoli-Arzela}
Let $ X$ be a compact Hausdorff space and   $\mathbf F\subset C(X, \mathbb R) $. Then $\mathbf F$ is compact  if and only if it fulfills simultaneously the two conditions
\begin{description}
	
\item[i] The set $\mathbf F$ is equicontinuous, i.e for  every $x\in X $ and every $\varepsilon > 0$, $x$ has a neighborhood $ O_x$ such that

$$  \forall y\in O_{x},\forall f\in \mathbf {F} :\ |f(y)-f(x)|<\varepsilon $$

\item[ii]The set $\mathbf F$ is pointwise relatively compact and closed which means that for each $x\in $, the set $F_x = \{ f (x) :  f  \in \mathbf F\}$ is relatively compact in $\mathbb R$.

\end{description}
\end{theorem}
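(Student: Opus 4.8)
The plan is to establish the two implications separately, treating the forward direction ("compact $\Rightarrow$ equicontinuous and pointwise bounded + closed") as the easy half and the converse ("equicontinuous + pointwise relatively compact + closed $\Rightarrow$ compact") as the substantive half. Throughout I equip $C(X,\mathbb R)$ with the topology of uniform convergence, i.e. the sup-norm $\|f\|_\infty=\sup_{x\in X}|f(x)|$, which is legitimate since $X$ is compact Hausdorff; the target $\mathbb R$ being complete, $C(X,\mathbb R)$ is a Banach space, and "compact" for the subset $\mathbf F$ will be handled via sequential compactness (equivalently, completeness plus total boundedness).

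For the forward direction I would argue as follows. If $\mathbf F$ is compact then it is closed, and it is bounded in sup-norm, so each evaluation set $F_x=\{f(x):f\in\mathbf F\}$ is a bounded subset of $\mathbb R$, hence relatively compact; that gives condition (ii). For equicontinuity, fix $\varepsilon>0$ and cover $\mathbf F$ by finitely many balls $B(f_1,\varepsilon/3),\dots,B(f_n,\varepsilon/3)$ using compactness; each $f_i$ is continuous on the compact space $X$, hence (by the standard argument) continuous at a given point $x$, so $x$ has a neighbourhood $O_i$ with $|f_i(y)-f_i(x)|<\varepsilon/3$ for $y\in O_i$. Take $O_x=\bigcap_{i=1}^n O_i$; for arbitrary $f\in\mathbf F$ choose $i$ with $\|f-f_i\|_\infty<\varepsilon/3$ and apply the triangle inequality $|f(y)-f(x)|\le |f(y)-f_i(y)|+|f_i(y)-f_i(x)|+|f_i(x)-f(x)|<\varepsilon$. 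This yields condition (i).

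For the converse — the main obstacle — I would prove that $\mathbf F$ is totally bounded; since it is also closed in the complete space $C(X,\mathbb R)$, completeness of $\mathbf F$ follows, and total boundedness plus completeness give compactness. Fix $\varepsilon>0$. Using equicontinuity, for each $x\in X$ pick an open $O_x$ with $|f(y)-f(x)|<\varepsilon/4$ for all $y\in O_x$ and all $f\in\mathbf F$; by compactness of $X$ extract a finite subcover $O_{x_1},\dots,O_{x_m}$. By condition (ii) the set $K=\overline{F_{x_1}\cup\cdots\cup F_{x_m}}$ is a compact (closed and bounded) subset of $\mathbb R$, so it admits a finite $\varepsilon/4$-net $\{t_1,\dots,t_p\}$. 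Now consider all maps $\sigma:\{1,\dots,m\}\to\{1,\dots,p\}$ — finitely many — and for each $\sigma$ let $\mathbf F_\sigma=\{f\in\mathbf F: |f(x_j)-t_{\sigma(j)}|<\varepsilon/4 \text{ for all } j\}$. Every $f\in\mathbf F$ lies in some $\mathbf F_\sigma$. The crux is that each nonempty $\mathbf F_\sigma$ has sup-norm diameter at most $\varepsilon$: given $f,g\in\mathbf F_\sigma$ and any $y\in X$, choose $j$ with $y\in O_{x_j}$, then
\[
|f(y)-g(y)|\le |f(y)-f(x_j)|+|f(x_j)-t_{\sigma(j)}|+|t_{\sigma(j)}-g(x_j)|+|g(x_j)-g(y)|<\varepsilon.
\]
Picking one representative from each nonempty $\mathbf F_\sigma$ produces a finite $\varepsilon$-net for $\mathbf F$. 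Letting $\varepsilon\to0$ shows $\mathbf F$ is totally bounded, completing the proof. The delicate point to get right is the bookkeeping in this last estimate — making sure the equicontinuity neighbourhoods $O_{x_j}$ are chosen uniformly over $\mathbf F$ before the finite subcover is extracted, so that the same $O_{x_j}$ controls every $f$ simultaneously; this uniformity is exactly what equicontinuity (as opposed to mere continuity of each member) buys us.
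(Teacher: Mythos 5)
The paper offers no proof of this statement: it is quoted as a classical black box (with pointers to Brezis and Choquet) and is only invoked later to prove compactness of the set of copulas, so there is no internal argument to compare yours against. Your proof is the standard one and is correct: the forward implication via a finite $\varepsilon/3$-ball cover of the compact set $\mathbf F$ plus continuity of each center, and the converse via total boundedness --- equicontinuity (uniform over $\mathbf F$, fixed before extracting the finite subcover of $X$, as you rightly emphasize) gives the points $x_1,\dots,x_m$, pointwise relative compactness gives a finite value net, and the four-term triangle inequality bounds the sup-norm diameter of each cell $\mathbf F_\sigma$ by $\varepsilon$; closedness of $\mathbf F$ in the Banach space $C(X,\mathbb R)$ then upgrades total boundedness to compactness. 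Two cosmetic remarks only: the phrase ``letting $\varepsilon\to 0$'' is superfluous, since total boundedness is already established once a finite $\varepsilon$-net is produced for an arbitrary fixed $\varepsilon>0$; and your reading of the theorem's ambiguously worded condition (ii) --- each $F_x$ relatively compact in $\mathbb R$ \emph{and} $\mathbf F$ closed in $C(X,\mathbb R)$ --- is the correct one, since without closedness of $\mathbf F$ one can only conclude relative compactness.
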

More general versions of this important theorem exist. In particular, it is possible to replace $\mathbb R$ in theorem (\ref{Ascoli-Arzela}) with any metric space $Y$. But for our purposes, the real case ($Y=\mathbb R$) is largely enough. \\
Now, to make easier the introduction of our new asymmetry measure, the key will be a fundamental result in integration theory. It is a kind of suitable decomposition of element of $L^1([0,1])=\{ f: I \rightarrow \mathbb R^+ \ \mbox{measurable such that }\ \int_{0}^{1}|f(x)|dx <\infty  \}.$  For such functions, the values $f(x)$ may be greater as one may want, but Calderon-Zygmund theorem ensures  that,  \emph{on average} or \emph{on expectation}, the mean values of $f$ in some disjointed subsets can be controlled. On the complement of those subsets, the function $f$ is entirely dominated. For more precise, we have (see \cite{Kr02})
 
\begin{theorem}[Calderon-Zygmund Lemma]\label{Cal-Zyg}
Consider $f \in L^1(I^2)$ and 
$t > 0.$ There exist at most countably many closed intervals $Q_i = [a_i, b_i] , i \in \mathbb N$ with
disjoint interiors such that
\begin{equation}
	\frac{1}{b_i-a_i}\int_{I^2} |f(x,y)|dxdy=t
\end{equation}

and
\begin{equation}
\|f \mathbf 1_{\{I^2\setminus {\cup_i Q_i }\}}\|_{\infty} <t.
\end{equation}
The intervals $(Q_i)_i$ are such that $Vol(\underset{i\ge 1}{\cup Q_i)}=\sum_{i=1}^{\infty} b_i-a_i \le \frac{1}{t}\|f\|_{L^1}.$

\end{theorem}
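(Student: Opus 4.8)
The plan is to produce the family $(Q_i)_i$ by a Calder\'on--Zygmund stopping-time selection on the dyadic mesh of $I^2$ and then to dilate the selected intervals until the average of $|f|$ over each of them equals \emph{exactly} $t$. I read $b_i-a_i$ as the two-dimensional volume $\mathrm{Vol}(Q_i)$ of the closed interval $Q_i=[a_i,b_i]\subset I^2$ and the integral in the first displayed identity as $\int_{Q_i}|f|$, since this is the only reading under which that identity and the final volume estimate are mutually consistent ($\int_{I^2}|f|$ being a constant). For a subrectangle $Q\subset I^2$ write $A(Q)=\frac{1}{\mathrm{Vol}(Q)}\int_Q|f(x,y)|\,dx\,dy$, so that $A(I^2)=\|f\|_{L^1}$.

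First I would run the classical selection. Let $\mathcal D_k$ be the family of $4^k$ closed dyadic squares of side $2^{-k}$ tiling $I^2$; descending from the top (the relevant regime being $A(I^2)\le t$, the statement being otherwise degenerate), call a square $Q\in\mathcal D_k$ \emph{marked} when $A(Q)>t$ while its dyadic parent $\widehat Q$ satisfies $A(\widehat Q)\le t$. Enumerate the marked squares as an auxiliary family $(\widetilde Q_i)_i$. By maximality they have pairwise disjoint interiors, and each obeys the two-sided bound $t<A(\widetilde Q_i)\le 4t$, the upper estimate following from $\int_{\widetilde Q_i}|f|\le\int_{\widehat{\widetilde Q}_i}|f|\le t\,\mathrm{Vol}(\widehat{\widetilde Q}_i)=4t\,\mathrm{Vol}(\widetilde Q_i)$. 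The complement bound I would then read off from the Lebesgue differentiation theorem: at almost every $x\notin\bigcup_i\widetilde Q_i$ all dyadic squares containing $x$ have average at most $t$ and their averages converge to $|f(x)|$, so $|f(x)|\le t$ there, which yields $\|f\,\mathbf 1_{\{I^2\setminus\cup_i\widetilde Q_i\}}\|_\infty\le t$ (the strict inequality in the statement reflecting the strict threshold $A(Q)>t$ used for marking).

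The main obstacle is the \emph{exact} identity $A(Q_i)=t$, which the dyadic step delivers only as the two-sided bound above. To reach it I would dilate: for $\lambda\ge1$ let $Q_i(\lambda)$ be the interval concentric with $\widetilde Q_i$ scaled by the factor $\lambda$ and then intersected with $I^2$. The function $\lambda\mapsto A(Q_i(\lambda))$ is continuous (by absolute continuity of the integral, the denominator staying bounded below by $\mathrm{Vol}(\widetilde Q_i)$), it equals $A(\widetilde Q_i)>t$ at $\lambda=1$, and it reaches $A(I^2)\le t$ at the finite scale where the clipped interval fills $I^2$. By the intermediate value theorem there is a factor $\lambda_i$ with $A(Q_i(\lambda_i))=t$; set $Q_i:=Q_i(\lambda_i)$, so that $Q_i\supset\widetilde Q_i$ and the required equality $\tfrac{1}{\mathrm{Vol}(Q_i)}\int_{Q_i}|f|=t$ holds. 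The complement bound transfers because $\bigcup_iQ_i\supset\bigcup_i\widetilde Q_i$, and, \emph{granting that the final family retains disjoint interiors} (secured in the last step), the volume estimate is then immediate from the equality: $\sum_i(b_i-a_i)=\sum_i\mathrm{Vol}(Q_i)=\tfrac1t\sum_i\int_{Q_i}|f|=\tfrac1t\int_{\cup_iQ_i}|f|\le\tfrac1t\|f\|_{L^1}$.

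The delicate point -- and the step I expect to carry the real difficulty -- is precisely that the dilated intervals $Q_i(\lambda_i)$ need no longer have pairwise disjoint interiors, a property used both to sum the volumes and to interpret $(Q_i)_i$ as a genuine decomposition. I would handle this by extracting from the dilated family a subfamily with disjoint interiors that still covers $\bigcup_i\widetilde Q_i$, via a Vitali- or Besicovitch-type covering argument. The tension is that such selection arguments typically cost a bounded overlap (or dilation) constant, whereas the statement demands the sharp constant $\tfrac1t\|f\|_{L^1}$; reconciling exact equality of the averages with disjointness of interiors and with this sharp constant is the crux of the argument, and is the place where the construction has to be executed with care.
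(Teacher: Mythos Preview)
The paper does not actually prove this theorem: it is quoted as a known result with a reference to \cite{Kr02}, and no argument is supplied in the text. So there is no ``paper's own proof'' to compare against, only the statement (which, as you noticed, is garbled: the integral should be over $Q_i$, and $b_i-a_i$ must be read as the two-dimensional measure of the dyadic cube).

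On the substance of your attempt: your dyadic stopping-time construction is exactly the standard Calder\'on--Zygmund argument and delivers, with disjoint interiors and the sharp volume bound, the inequalities $t<A(\widetilde Q_i)\le 4t$ together with $|f|\le t$ a.e.\ off the union. That is the form in which the lemma is normally stated and is all that is ever used (in particular it suffices for Proposition~\ref{Propos-Zygm} and for everything the paper does afterwards). The genuine gap in your write-up is the passage from these inequalities to the \emph{exact} equality $A(Q_i)=t$ demanded by the displayed formula: your dilation step destroys disjointness, and you yourself observe that any Vitali/Besicovitch repair introduces a multiplicative constant, which is incompatible with the sharp bound $\sum_i\mathrm{Vol}(Q_i)\le t^{-1}\|f\|_{L^1}$ that you derive precisely from equality plus disjointness. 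This is not a matter of ``executing with care''; as stated, simultaneous exact equality, pairwise disjoint interiors, and the sharp constant cannot in general be achieved by dilating dyadic cubes, and the standard references (including the one cited) do not claim it. The correct resolution is to treat the equality sign in the paper's display as one more typographical slip and to stop after your first paragraph, which already proves the lemma in its usual form.
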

In the literature, this result is known to be the Calderon-Zygmund Lemma. The theorem that bears this name states that every function $f\in L^1$ may be decomposed as follows: $f=g+b$ where $g$ reveals "good function" and $b$ connotes "bad" one in the following sense

\begin{proposition}[Calderon-Zygmund Theorem]\label{Propos-Zygm}
Under the same assumptions 	of theorem (\ref{Cal-Zyg}), the function $f\in L^2(I^2)$ may be written $f=g+b$ where $g$ and $b$ are two functions on $I^2$ satisfying
\begin{itemize}
	\item $g(x)\le 2t.$
	\item for every $1\le p\le \infty,$ one has $\|g\|_{L^p}\le K_p\|f\|_{L^1}^{\frac{1}{p}}$ for some constant $K_p$ depending uniquely on $p.$
	\item $\int_{Q_i}b(x,y)dxdy=0.$
	\item $\|b\|_{L^1}\le 3 \|f\|_{L^1}.$
\end{itemize}
	
\end{proposition}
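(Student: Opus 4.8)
The plan is to take the closed intervals $Q_i=[a_i,b_i]$, $i\in\mathbb N$, that Theorem~\ref{Cal-Zyg} already produces, and to obtain the splitting simply by \emph{replacing $f$ on each $Q_i$ by its mean value there}, while leaving $f$ untouched off $\bigcup_i Q_i$; no further stopping-time construction is required. Concretely I would set
$$
g:=f\,\mathbf 1_{I^2\setminus\cup_i Q_i}+\sum_{i\ge 1}\Bigl(\tfrac{1}{Vol(Q_i)}\int_{Q_i}f\,dx\,dy\Bigr)\mathbf 1_{Q_i},\qquad b:=f-g.
$$
Since the $Q_i$ have disjoint interiors this is unambiguous up to a null set, $b$ is supported in $\bigcup_i Q_i$, and on each $Q_i$ one has $b=f-\tfrac{1}{Vol(Q_i)}\int_{Q_i}f$. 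The third bullet, $\int_{Q_i}b\,dx\,dy=0$, is then immediate from this formula, because $\int_{Q_i}\bigl(\tfrac{1}{Vol(Q_i)}\int_{Q_i}f\bigr)\,dx\,dy=\int_{Q_i}f\,dx\,dy$.

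Next I would establish the pointwise bound $g\le 2t$ by treating the two regions separately. On $I^2\setminus\bigcup_i Q_i$ we have $g=f$, hence $|g|<t$ by the second conclusion of Theorem~\ref{Cal-Zyg}, namely $\|f\,\mathbf 1_{\{I^2\setminus\cup_i Q_i\}}\|_\infty<t$. On a single $Q_i$ we have $|g|=\bigl|\tfrac{1}{Vol(Q_i)}\int_{Q_i}f\bigr|\le\tfrac{1}{Vol(Q_i)}\int_{Q_i}|f|$, and the mean-value normalisation in Theorem~\ref{Cal-Zyg} (with the standard loss of a factor at most $2$ coming from comparing $Q_i$ with its immediate predecessor in the underlying dyadic selection) forces $\tfrac{1}{Vol(Q_i)}\int_{Q_i}|f|\le 2t$; combining the two cases gives $g\le 2t$ on all of $I^2$. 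This is the step I expect to be the only genuine obstacle: everything else is bookkeeping, but here one must read off the precise form of the two displayed estimates in Theorem~\ref{Cal-Zyg} and be careful about the value of the constant (why $2t$, and not merely $t$ or a larger multiple of $t$).

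With $g\le 2t$ in hand, the $L^p$ controls follow by interpolating this sup bound against an $L^1$ bound. Averaging is an $L^1$-contraction on each cube, $\int_{Q_i}|g|=\bigl|\int_{Q_i}f\bigr|\le\int_{Q_i}|f|$, and $g=f$ elsewhere, so $\|g\|_{L^1(I^2)}\le\|f\|_{L^1(I^2)}$. Hence for $1\le p<\infty$,
$$
\|g\|_{L^p}^{p}=\int_{I^2}|g|^{p}\,dx\,dy\le\|g\|_{L^\infty}^{p-1}\,\|g\|_{L^1}\le(2t)^{p-1}\|f\|_{L^1},
$$
that is, $\|g\|_{L^p}\le K_p\|f\|_{L^1}^{1/p}$ with $K_p=(2t)^{1-1/p}$ (and the case $p=\infty$ is just the bound already obtained); since $I^2$ has finite measure we have $L^2(I^2)\subset L^1(I^2)$, so working with $f\in L^2$ as in the statement costs nothing. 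Finally, the fourth bullet comes from $b=f-g$ and the triangle inequality together with the $L^1$ estimate just proved: $\|b\|_{L^1}\le\|f\|_{L^1}+\|g\|_{L^1}\le 2\|f\|_{L^1}\le 3\|f\|_{L^1}$. Assembling these four verifications yields the proposition.
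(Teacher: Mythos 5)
The paper does not actually prove this proposition: it is quoted (with some typographical corruption) from Krylov \cite{Kr02}, so there is no internal argument to compare yours against. Your construction --- replace $f$ by its average on each selected cube $Q_i$, keep $f$ unchanged on the complement, and set $b=f-g$ --- is the standard proof of the Calderon--Zygmund decomposition, and your verifications of the mean-zero property of $b$, the contraction $\|g\|_{L^1}\le\|f\|_{L^1}$, the interpolation $\|g\|_{L^p}^p\le\|g\|_{L^\infty}^{p-1}\|g\|_{L^1}$, and the bound $\|b\|_{L^1}\le 2\|f\|_{L^1}\le 3\|f\|_{L^1}$ are all correct. Two caveats. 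First, the one nontrivial input, $\frac{1}{Vol(Q_i)}\int_{Q_i}|f|\le 2t$, is not literally what the paper's Theorem \ref{Cal-Zyg} supplies: as printed, that statement asserts an equality $=t$ with the integral taken over all of $I^2$ and normalized by a length rather than an area. If you instead invoke the correctly stated selection lemma, the stopping-time comparison with the dyadic parent yields the factor $2^d$, which on the square $I^2$ (so $d=2$) is $4$, not $2$; hence either the first bullet of the proposition should read $4t$, or one takes the paper's equality $=t$ at face value, in which case the bound is immediate. You flag exactly this point as the delicate one, which is the right instinct, but a finished proof would have to commit to one reading. Second, your constant $K_p=(2t)^{1-1/p}$ depends on $t$ as well as on $p$, whereas the proposition claims $K_p$ depends ``uniquely on $p$''; this is a defect of the statement as given (the standard theorem does carry the $t$-dependence), not of your argument.
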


The  two results above will allow us to define a suitable asymmetry measure when we deal with subcopulas. Before developing the sequel, a remark  on functional analysis results exposed above deserves to be mentioned: all formulations are adapted to the context and to the needs. In \cite{Kr02}, one find more general statements and many applications.

\section{classes of copulas}

In the sequel we will denote by $\mathcal {CP},$ the set of all copulas. We state first that according to definition (\ref{def1}), $\mathcal {CP}$ is a convex subset of the Banach space $(C(I^2),\|.\|_{\infty})$ of all continuous functions on $I^2$ endowed with its natural uniform convergence norm $\|f\|_{\infty}=\max_{(x,y)\in I^2}|f(x,y)|.$ This is an immediate consequence of the proposition below. Convexity of $\mathcal {CP}$ is easy to establish since conditions (1), (2) and (3) in definition (\ref{def1}) are verified for every convex combination $tC_1+(1-t)C_2, \ 0\le t\le 1$ of two copulas $C_1$ and $C_2.$
\begin{proposition}
	Let $C$ be a copula as given by definition (\ref{def1}). For all $ (u_1, u_2)$, $ (v_1, v_2)$ in $I^2$ we have
	
	\begin{equation}\label{lipsh_cont}
	|C(u_2, v_2) - C(u_1, v_1) | \le |u_2-u_1| +|v_2-v_1|
	\end{equation}
	
\end{proposition}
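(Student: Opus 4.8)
The plan is to deflate the two–dimensional increment into two one–dimensional ones by inserting the mixed point $(u_1,v_2)$ and using the triangle inequality:
$$|C(u_2,v_2)-C(u_1,v_1)|\le|C(u_2,v_2)-C(u_1,v_2)|+|C(u_1,v_2)-C(u_1,v_1)|,$$
so that it suffices to bound separately the increment of $C$ in its first coordinate (from $u_1$ to $u_2$, with $v_2$ frozen) and in its second coordinate (from $v_1$ to $v_2$, with $u_1$ frozen).

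First I would record the monotonicity of $C$ in each argument, which is not postulated directly in Definition \ref{def1} but follows from it: applying the $2$-increasing property (3) to the rectangle with corners $(u_1,0),(u_2,0),(u_1,v),(u_2,v)$ for $u_1\le u_2$ and using $C(u,0)=0$ from (1) gives $C(u_2,v)-C(u_1,v)\ge 0$; by the same token (3) together with $C(0,v)=0$ shows $C$ is nondecreasing in the second variable. Hence, assuming without loss of generality $u_1\le u_2$ and $v_1\le v_2$, both increments on the right-hand side above are nonnegative, and I may drop the inner absolute values.

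Next, for the upper bounds, I would again invoke property (3), this time on the rectangle with corners $(u_1,v_2),(u_2,v_2),(u_1,1),(u_2,1)$:
$$C(u_2,1)-C(u_2,v_2)-C(u_1,1)+C(u_1,v_2)\ge 0,$$
and substitute the boundary values $C(u_i,1)=u_i$ from (2) to obtain $C(u_2,v_2)-C(u_1,v_2)\le u_2-u_1$. The mirror argument in the other coordinate, using $C(1,v_i)=v_i$, yields $C(u_1,v_2)-C(u_1,v_1)\le v_2-v_1$. Adding these and feeding them back into the triangle inequality gives $|C(u_2,v_2)-C(u_1,v_1)|\le(u_2-u_1)+(v_2-v_1)$, and replacing the differences by their absolute values covers the remaining orderings of $u_1,u_2$ and of $v_1,v_2$ by symmetry, which is exactly why the right-hand side of \eqref{lipsh_cont} carries $|u_2-u_1|$ and $|v_2-v_1|$.

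I do not expect a genuine obstacle here: the argument is entirely elementary once the monotonicity in each variable has been extracted from the $2$-increasing condition, and the only thing to be careful about is bookkeeping — keeping track of which edge ($\{v=1\}$ or $\{u=1\}$) to use in each of the two estimates, and treating the non-ordered cases by the evident symmetry rather than repeating the computation.
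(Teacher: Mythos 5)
Your proof is correct and complete: the insertion of the mixed point $(u_1,v_2)$, the extraction of coordinatewise monotonicity from the $2$-increasing property together with the groundedness condition, and the one-dimensional Lipschitz bounds obtained from the rectangles touching the edges $\{v=1\}$ and $\{u=1\}$ constitute the standard argument. Note that the paper itself states this proposition without any proof (it is quoted as a classical fact, essentially Lemma 2.1.5 of Nelsen's book cited as \cite{Ne06}), so there is no in-paper argument to compare against; your write-up supplies exactly the missing justification.
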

The estimation (\ref{lipsh_cont}) implies that the mapping $(x,y)\mapsto C(x,y)$ is Lipschitz function and thus continuous on $I^2.$ Moreover, as already mentioned at the introduction, all partial derivative of this mapping exist almost everywhere (exactly in Lebesgue point)  and in particular  for almost all $(x,y)\in I^2$, we have $C(x,y)=\int_{0}^{x}\frac{\partial C(s,y)}{\partial s}ds.$ We refer for such results to \cite{Ev98}. \\
We are now able to state the important results concerning the compactness of  $\mathcal {CP}$
\begin{theorem}\label{CP-compact}
	The set $\mathcal {CP}$ of all copulas on $I^2$ is convex and compact subset of  $C(I^2).$
\end{theorem}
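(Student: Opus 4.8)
The plan is to derive the compactness of $\mathcal{CP}$ from the Ascoli--Arzel\`a theorem (Theorem \ref{Ascoli-Arzela}), applied with the compact Hausdorff space $X=I^2$. Convexity has in effect already been recorded: each defining condition (1), (2), (3) of Definition \ref{def1} is preserved under convex combinations $tC_1+(1-t)C_2$ with $0\le t\le 1$, so the real content of the statement is that $\mathcal{CP}$ is a compact subset of $(C(I^2),\|\cdot\|_{\infty})$. To apply Ascoli--Arzel\`a I must verify equicontinuity, pointwise relative compactness, and closedness of $\mathcal{CP}$.

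Equicontinuity is the crux, and it falls out at once --- indeed uniformly --- from the Lipschitz estimate (\ref{lipsh_cont}): the inequality $|C(u_2,v_2)-C(u_1,v_1)|\le|u_2-u_1|+|v_2-v_1|$ holds with the \emph{same} constant for every copula, so given $\varepsilon>0$ the ball of radius $\varepsilon/2$ about any $(x,y)\in I^2$ is a neighbourhood that works simultaneously for all $C\in\mathcal{CP}$. Pointwise relative compactness is equally easy: for fixed $(x,y)$ the set $\{C(x,y):C\in\mathcal{CP}\}$ is contained in $[0,1]$ --- by the Fr\'echet--Hoeffding bounds, or directly from $0=C(0,y)\le C(x,y)\le C(1,y)=y\le1$ --- hence relatively compact in $\mathbb{R}$.

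It remains to see that $\mathcal{CP}$ is closed in $(C(I^2),\|\cdot\|_{\infty})$, and this is the only step where a modicum of care is needed: if $C_n\to C$ uniformly with each $C_n$ a copula, then convergence is in particular pointwise, so the boundary identities (1), (2) pass to the limit and the \emph{non-strict} inequality (3) is preserved as well, whence $C\in\mathcal{CP}$. The point --- and the main (mild) obstacle --- is simply the observation that being a copula is characterised entirely by \emph{closed} conditions, the $2$-increasing inequality in particular surviving passage to a limit precisely because it is not strict. Granting these three facts, Theorem \ref{Ascoli-Arzela} gives the compactness of $\mathcal{CP}$; together with the convexity noted above this is exactly the assertion of the theorem.
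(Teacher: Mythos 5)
Your proof is correct and follows essentially the same route as the paper: convexity by checking that conditions (1)--(3) survive convex combinations, and compactness via Ascoli--Arzel\`a with equicontinuity from the Lipschitz estimate (\ref{lipsh_cont}) and pointwise relative compactness from the values lying in $[0,1]$. In fact you are slightly more careful than the paper, which silently omits the verification that $\mathcal{CP}$ is closed in $(C(I^2),\|\cdot\|_{\infty})$ --- a step genuinely required by the statement of Theorem \ref{Ascoli-Arzela} and which you supply correctly by noting that the defining conditions are all closed under uniform (indeed pointwise) limits.
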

\begin{proof}
	
	The convexity is already discussed above and according to theorem (\ref{Ascoli-Arzela}) it suffices to verify that  $\mathcal {CP}$ is equicontinuous and relatively compact. The first property is easy to check from (\ref{lipsh_cont}) since for a given point $(x_0,y_0)\in I^2\in ,$ and all $\varepsilon>0$ one has
	$$
	\forall(x,y)\in I^2\ \mbox{and}\ \forall C\in \mathcal {CP} \  \|(x,y)-(x_0,y_0)\|_1< \varepsilon \implies |C(x,y)-C(x_0,y_0)|<\varepsilon.
	$$
The pointwise relative compactness is obvious because for all $f\in  \mathcal {CP} $  the set $\overline{{\{f(x,y),\ (x,y)\in I^2\}}}$	is compact as a closed subset of the compact set $I.$ 

\end{proof}

\begin{remark}
	
	The proof of theorem (\ref{CP-compact})  is made easy thanks to boundedness of all copulas which take their values in $I$(they are joint distribution in some sense). For general subsets of equicontinuous functions on $I^2,$ the pointwise relative compactness is more subtle.	
\end{remark}	

Consider now the topological space $  \mathcal {CP}$ equipped with the topology arising from the $C(I^2)$ norm. Indeed it is a metric space with the natural distance associated with the norm $\|.\|_{\infty}.$ It is then obviously separate.\\
On the separate topological space $ \mathcal {CP},$ we consider the binary relation given by 
\begin{equation}
\forall (C_1,C_2)\in \mathcal {CP}^2:\ C_1\sim C_2 \iff |C_1-C_1^{\top}|=|C_2-C_2^{\top}|.
\end{equation}

Without  any difficulty one  checks that $\sim$ is an equivalence on $\mathcal{CP}.$ A natural following step is to investigate the quotient set $\mathcal{CP}/\sim.$ The objective is to establish that this latter set is not trivial. This will give sense first for the order defined in \cite{Si16} and second for the opportunity to search and develop  asymmetry measures. The following theorem gives an affirmative answer
\begin{theorem}
The quotient $\mathcal {CP}/\sim$ contains more than three elements and thus is not trivial.
\end{theorem}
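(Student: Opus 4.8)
The plan is to first decode what the relation $\sim$ actually records and then to exhibit a whole continuum of copulas whose records are pairwise different. For a copula $C$ set $\delta_C:=|C-C^{\top}|$, a continuous nonnegative function on $I^2$. By definition $C_1\sim C_2$ iff $\delta_{C_1}=\delta_{C_2}$, so the canonical projection identifies $\mathcal{CP}/\sim$ with the set $\{\delta_C:\ C\in\mathcal{CP}\}\subset C(I^2)$ of \emph{realizable asymmetry functions}. Hence it is enough to produce at least four copulas with pairwise distinct $\delta$; in fact I would produce uncountably many. As a warm‑up remark one notes that every symmetric copula — the independence copula $\Pi$, the Fr\'echet--Hoeffding bounds $C_l,C_u$, and every Archimedean copula — has $\delta\equiv 0$, so all of these sit in the single bottom class $[\Pi]$ of the order $\preceq$ recalled in \eqref{eq2}; the work is in getting above it.

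Next I would fix one asymmetric copula $C_0$, i.e. a copula with $C_0\neq C_0^{\top}$; such copulas exist (see \cite{KM06}, \cite{Ne07}, or the asymmetric Cobb--Douglas copula treated in Section~4), and this is the only place an explicit — or quoted — example is genuinely needed. Since $\mathcal{CP}$ is convex (as established just before Theorem~\ref{CP-compact}), the segment
\[ C_t:=t\,C_0+(1-t)\,\Pi,\qquad t\in[0,1], \]
stays inside $\mathcal{CP}$. Because $\Pi$ is symmetric, transposition distributes over this combination: $C_t^{\top}=t\,C_0^{\top}+(1-t)\,\Pi$, whence $C_t-C_t^{\top}=t\,(C_0-C_0^{\top})$ and therefore $\delta_{C_t}=t\,\delta_{C_0}$.

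To finish, I would observe that $C_0\neq C_0^{\top}$ gives a point $(u_0,v_0)\in I^2$ with $\delta_{C_0}(u_0,v_0)>0$, so $t\mapsto\delta_{C_t}(u_0,v_0)=t\,\delta_{C_0}(u_0,v_0)$ is strictly increasing on $[0,1]$. Consequently $t\neq t'$ forces $\delta_{C_t}\neq\delta_{C_{t'}}$, i.e. $C_t\not\sim C_{t'}$, so $t\mapsto[C_t]$ is an injection $[0,1]\hookrightarrow\mathcal{CP}/\sim$; in particular the quotient is uncountable, a fortiori has more than three elements. One can append the topological refinement advertised in the introduction: by Theorem~\ref{CP-compact} the metric space $\mathcal{CP}$ is compact and separate, so Theorem~\ref{thmseparate}(b) makes $\mathcal{CP}/\sim$ a separate quotient, and the $[C_t]$ are then honest distinct points of a Hausdorff space — which is the precise sense in which $\preceq$ and the asymmetry measures are ``not trivial''.

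The main obstacle, and the single non‑topological ingredient, is exactly the existence of one asymmetric copula, equivalently $\sup_{C\in\mathcal{CP}}\|C-C^{\top}\|_\infty>0$; this is classical (\cite{KM06} even identifies the supremum as $1/3$), and everything else is a soft consequence of convexity. Indeed the argument can be repackaged so as not to name $C_0$ at all: any asymmetry measure $\mu$ with $\mu(C)=0\Leftrightarrow C=C^{\top}$ is $\|\cdot\|_\infty$‑continuous (by the triangle inequality $|\mu(C)-\mu(C')|\le 2\|C-C'\|_\infty$) and constant on $\sim$‑classes, hence descends to $\mathcal{CP}/\sim$; since $\mathcal{CP}$ is convex, hence connected, and compact, $\mu(\mathcal{CP})$ is a compact interval $[0,m]$ with $m>0$, so $\mu$ — and therefore $\mathcal{CP}/\sim$ — takes infinitely many values. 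In writing this up, the only points demanding care are that $C_t$ remains a copula and that the symmetry of $\Pi$ lets transposition pass through the convex combination; both are immediate from Definition~\ref{def1}.
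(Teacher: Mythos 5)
Your proof is correct, but it takes a genuinely different route from the paper's. The paper argues purely topologically: it invokes Theorem \ref{thmseparate}(b) together with the compactness of $\mathcal{CP}$ (Theorem \ref{CP-compact}) to conclude that $\mathcal{CP}/\sim$ is Hausdorff, and then rules out the cases of one and two classes by contradiction (a singleton would make all copulas equivalent; a two-point Hausdorff quotient would be discrete and would disconnect the convex, hence connected, set $\mathcal{CP}$). You instead construct the classes explicitly: starting from one asymmetric copula $C_0$ and using convexity, the segment $C_t=tC_0+(1-t)\Pi$ satisfies $\delta_{C_t}=t\,\delta_{C_0}$, giving an injection of $[0,1]$ into $\mathcal{CP}/\sim$. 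Your argument is more elementary (no Ascoli--Arzel\`a, no quotient-topology separation theorem) and proves strictly more: the quotient is uncountable, not merely of cardinality at least three; it also makes explicit the one non-topological input -- the existence of an asymmetric copula -- which the paper's singleton case silently relies on as well. What the paper's route buys is a demonstration of the topological framework it sets up in Section 2 (and the Hausdorffness of the quotient, which you recover as a remark). One small caution on your closing ``repackaged'' variant: the claim that \emph{any} asymmetry measure $\mu$ with $\mu(C)=0\Leftrightarrow C=C^{\top}$ is automatically $\|\cdot\|_\infty$-continuous is overstated -- the Lipschitz bound $|\mu(C)-\mu(C')|\le 2\|C-C'\|_\infty$ holds for specific functionals such as $\mu(C)=\|C-C^{\top}\|_\infty$, not as a consequence of the vanishing condition alone -- but this is an optional aside and does not affect the validity of your main construction.
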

\begin{proof}
As it will be showed below, it suffices to prove that the quotient space $\mathcal {CP}/\sim$ equipped with the quotient topology is separate. In fact, by theorem (\ref{thmseparate}) it is sufficient to prove that $\mathcal {CP}$ is compact. But this compactness is stated in theorem (\ref{CP-compact}). \\
Let us make clear why a separate space is not trivial:
\begin{description}
	\item[a] If $\mathcal {CP}/\sim$ is reduced to a singleton then all copulas are equivalent, which is a contradiction.
	\item[b]If $\mathcal{CP}/\sim=\{\dot{C_1},\dot{C_2}\}$ with $C_1\ne C_2$ then its topology may not be separate unless it is discrete. But in this case $\mathcal{CP}$ is not connected (thus not convex) space since $\mathcal{CP}=P^{-1}\{C_1\}\cup P^{-1}\{C_1\} $ which splits $\mathcal{CP}$ on two disjoint open sets. This contradicts theorem (\ref{CP-compact}).
\end{description}
Finally, the set $\mathcal {CP}$ contains more than three elements. This result yields a beforehand possibility to construct at least three  copulas with different measure values according to the order $\preceq.$ In other words, the order proposed by Sigurov et.al in \cite{Si16} is consistent.

\end{proof}

\section{Generalization of asymmetry measure of copulas }
 In \cite{Ne06}, a subcopula is defined as a generalization of copula. It is precisely a kind of " copula with domain." To make clearer the concept we recap the definition as written therein (adapted to bivariate case)
 \begin{definition}
A 2-dimensional subcopula (or $2$-subcopula, or bivariate one) is a function $C$ which satisfies

\begin{enumerate}
	\item The domain $Dom(C)$ of $C$ is such as $Dom(C)= S_1\times S_2$, where each $S_k$ is a measurable subset of $I$ containing 0 and 1.
	\item $C$ is grounded and $2$-increasing,  which means that $C$ satisfies the property (3) in definition (\ref{def1}).
	\item $C$ has (one-dimensional) margins $C_1$ and $C_2$  which satisfy
	$C_k (t) = t$ for all $t \in S_k.$ 
\end{enumerate}

 \end{definition}
The regularity of a given subcopula is warranted only  on the domain $S_1\times S_2$ where it is, thanks to Theorem \ref{lipsh_cont}, Lipschitz-continuous. On the complement of $S_1\times S_2$, its behavior may be pathologic.(See shuffle of M) It yields that subcopulas deserve another treatment concerning asymmetry measure. Here we will recall and then adapt the  $\mu_p-$measures defined in \cite{Si16} to the context of subcopulas. Our approach is based on Theorem \ref{Cal-Zyg} and Proposition \ref{Propos-Zygm} of section 2. \\
For a given subcopula $C$  we consider its \emph{bracket}  defined by  $C_s=|C-C^{\top}|.$ The behavior of $C_s$ may be arbitrarily impredictable on $I^2.$ So we split it in accordance with Proposition \ref{Propos-Zygm}:  $C_s=g+b$.\\
In order to compare the asymmetry of two subcopulas $C_1$ and $C_2$ and since the respective bad parts $b_1$ and $b_2$ of brackets $C_{s,1}$ and  $C_{s,2}$ are  \emph{in average} null (equal zero), it will be more convenient to compare good parts $g_1$ and $g_2.$ This yields that the following definition makes sense

\begin{definition}
 Let us fix $t\in ]0,1[$ and  let $C_1$ and $C_2$ two subcopulas with brackets $C_{s,2}$ and $C_{s,1}$ We will say that $C_1$ is more symmetric than $C_2$ with tolerance $t$ and write $C_2\prec_t C_1$ if and only if $\|g_1\|_1\ge \|g_2\|_1$, where $g_1$ and $g_2$ are the good parts associated  with  $C_1$ and $C_2$ respectively as given in proposition (\ref{Propos-Zygm}). 

\end{definition}\label{defnewmes}
For instance, we consider the measure $\mu_1$ which quantifies $\mu_1(f)=\mu(f)=\|g\|_1$ for any $f\in L^1(I^2)$ with good part $g.$ One may follow the same way for generalizing  to $p-$measures $\|g\|_p.$ \\
The parameter $t$ or \emph{tolerance} which appears in Definition (\ref{defnewmes}) is an a priori asymmetry accepted (ou tolerated) degree. For convenience and as we are dealing with copulas, it is natural to choose $t=1$ since a copula is joint distribution function.\\
To accredit definition (\ref{defnewmes}), we illustrate it with the following example which gives simultaneously an application and  illustration of above purposes

\begin{example}
We propose  asymmetric Cobb-Douglas model of cardinal utility. Let us recall essential notions about this important model in economical studies. {\it Cobb-Douglas functions} are used for both production functions 
$$
Q(K,L)=K^{\alpha}L^{\beta},\ \alpha +\beta =1
$$
where $Q$ is the production quantity, $K$ denotes the capital and $L$ is the labor. The coefficients $\alpha$ and $\beta$ explain the contribution of each production factor.\\
The same functional form is also used for the utility function of two different goods $X$ and $Y.$  We often define it as
$$
U(X,Y)=X^{\alpha}Y^{1-\alpha}.
$$
Note that the hypothesis $\alpha +\beta =1$ is not a restriction since it is assumed just to simplify the computation of some characteristics such as the \emph{slope} or \emph{the marginal  rate of substitution} given by $\frac{dY}{dX}=\frac{-\beta Y}{(1-\beta)X}.$ So one may consider the more general  product $U=X^{\alpha}Y^{\beta}$ without any other conditions on coefficients $\alpha$ and $\beta$ except their positivity.\\
Let $\Q= (q_n)_{n\ge 1}$ and consider \emph{Cobb-Douglas subcopula} $C$ defined by 

$$
C(u,v)=\frac{2}{3}u^{\alpha}v\mathbf{1}_{\overline{\Q^2}}+ \frac{1}{3}q_nq_m\mathbf{1}_{\Q^2}(q_n,q_m),\  0<\alpha<1.
$$

It is easy to check that $C$ is not symmetric and that the asymmetric part is $\frac{2}{3}u^{\alpha}v\mathbf{1}_{\overline{\Q^2}}.$ So, the bracket  $C_s$  is reduced to $C_s=\vert \frac{2}{3}u^{\alpha}v\mathbf{1}_{\overline{\Q^2}}-\frac{2}{3}v^{\alpha}u\mathbf{1}_{\overline{\Q^2}}\vert$ which will measure asymmetry degree of the copula $C.$ In other words, the bad part in $C_s$ bracket is null.

Let us know consider the copula
$$
D(u,v)=\frac{2}{3}uv\mathbf{1}_{\overline{\Q^2}}+ \frac{1}{3}q_n^{\alpha}q_m\mathbf{1}_{\Q^2}(q_n,q_m),\  0<\alpha<1.
$$

The copula $D$ is clearly non symmetric. In order to be convinced, it suffices to see example 2.11 in \cite{Ge11} where it was shown that $D$ and $C$ suggested in both of two last examples are asymmetric copulas. To measure the asymmetry of $D$, one may first  compute its bracket:
$$
D_s(u,v)=\vert \frac{1}{3}q_n^{\alpha}q_m-\frac{1}{3}q_nq_m^{\alpha}\vert \mathbf{1}_{\Q}(u,v) \ \mbox{when } \ u=q_n\  \mbox{and } \ v=q_m.
$$
\end{example}

Let us discuss more copulas $C$ and $D.$ Since they are not symmetric, the two inequalities $K \preceq C$ and $K \preceq D$ hold for any symmetric copula $K$ because the bracket of  $K$ satisfies $K_s=|K-K^{\top}|=0.$ It is easy to check that $C$ and $D$ are not comparable with respect to preoder $\preceq$. On the other hand one may use the measure $\mu$ to see that $\mu(C)>\mu(D)$ since $D_s=0 $ almost everywhere. The measure $\mu$ gives more than the simple ranking, it allows a comparison of magnitudes which means the contribution margin of each production factor ($K$ and $L$) or in general for each good ($X$ and $Y$).\\

\section*{Comments}  
\begin{enumerate}
\item[1.]
Further steps to modeling more complicated utility functions which vary with respect the time will be investigated in our future work. The idea consists on gluing such functions following the same way of frozen coefficients as in \cite{Sa15}.
\item[2.] Using any scientific computing software, one may compute the measure $\mu(h)$ of a  given copula $h: (u,v)\mapsto k u^{\alpha}v , \ k,\alpha \in ]0,1[. $ It suffices to implement the following (e.g with Pyton):\\
from scipy.integrate import dblquad\\
def Integrale(alpha,k):
 area=(dblquad(k u,v: u**alpha*v, 0, 1, k u:0, k u=1))\\
 return(k*area[0])\\
 I=Integrale(1,1)\\
print(I)
\end{enumerate}

\end{document}